\documentclass[a4paper, 12pt]{article}
\usepackage{hyperref}
\usepackage[utf8]{inputenc}
\usepackage[T2A]{fontenc}
\usepackage{amsmath,amssymb,amsthm}
\usepackage[a4paper,hmargin=2.5cm,vmargin=2.5cm]{geometry}
\usepackage{enumitem}
\usepackage{cleveref}
\usepackage{bm}
\usepackage[numbers,sort]{natbib}
\usepackage{comment}
\usepackage{xcolor}

\newcommand{\AAA}{\mathbb{A}}

\newcommand{\ZZ}{\mathbb{Z}}

\newcommand{\CC}{\mathbb{C}}
\newcommand{\KK}{\mathbb{K}}

\newcommand{\bx}{\mathbf{x}}
\newcommand{\by}{\mathbf{y}}
\newcommand{\balpha}{\bm{\alpha}}
\newcommand{\bomega}{\bm{\omega}}

\DeclareMathOperator{\Vect}{Vec}
\DeclareMathOperator{\Div}{div}
\DeclareMathOperator{\Aut}{Aut}
\DeclareMathOperator{\SAut}{SAut}

\newtheorem{proposition}{Proposition}
\newtheorem*{proposition*}{Proposition}

\newtheorem{lemma}{Lemma}
\newtheorem{corollary}{Corollary}

\newtheorem{theorem}{Theorem}
\newtheorem{problem}{Problem}

\theoremstyle{definition}
\newtheorem{notation}{Notation}
\newtheorem{definition}{Definition}
\newtheorem{example}{Example}

\begin{document}

\title{The Lie algebra of polynomial vector fields on the affine space with constant divergence is $1.5$-generated}

\author{Ivan Beldiev\thanks{The work of the first author is supported by the grant RSF25-11-00302 (Sections 2 and 3)} \and Gleb Pogudin\thanks{GP was supported by the French ANR-22-CE48-0008 OCCAM and ANR-22-CE48-0016 NODE projects (Sections 1 and 4)}}

\newcommand{\Addresses}{{% additional braces for segregating \footnotesize
  \bigskip
  \footnotesize

Ivan Beldiev, \textsc{HSE University, Faculty of Computer Science, Pokrovsky Boulevard 11, Moscow, 109028 Russia}\par\nopagebreak
\textit{E-mail address:} \texttt{ivbeldiev@gmail.com, isbeldiev@hse.ru}

  \medskip

Gleb Pogudin, \textsc{LIX, CNRS, École polytechnique, Institute Polytechnique de Paris, Paris, France}\par\nopagebreak
\textit{E-mail address:} \texttt{gleb.pogudin@polytechnique.edu}

}}

\date{}

\maketitle

\begin{abstract} 
A Lie algebra is said to be $1.5$-generated if every nonzero element can be completed to a two-element generating set.
We prove that the Lie algebra of polynomial vector fields with constant divergence on the affine space is $1.5$-generated. 
We also show that the Lie algebra of all polynomial vector fields on the complex affine space is generated by two completely integrable elements.\end{abstract}

\section{Introduction}\

One of the most common ways to describe an algebraic structure is by its generators, and the question of the minimal cardinality of a generating set is very natural in this context.
In the present paper, we study generating sets of Lie algebras over a field of zero characteristic.
Any noncommutative Lie algebra requires at least two generators.
In fact, two generators are sufficient for any semisimple finite dimensional Lie algebra as established by Kuranishi~\cite[Section 2]{Kur}; in other words, such algebras are \emph{2-generated}.
Further results in this direction include imposing extra conditions on the generators~\cite{Chist,Chist2} and considering positive characteristic~\cite{Bois}.

In the case of inifinite-dimensional Lie algebras, one important situation to consider is the Lie algebra $\Vect(X)$ of algebraic vector fields on an affine variety $X$.
This Lie algebra consists of all derivations of the algebra of regular functions on $X$.
In this setting, a series of results has been obtain by Andrist in~\cite{Andrist2019, Andrist2023, Andrist2024}.
It was established in~\cite{Andrist2019} that $\Vect(\mathbb{C}^n)$\footnote{In fact, the proof works for any field of zero characteristic} is 3-generated.
Later~\cite{Andrist2023,Andrist2024}, explicit finite generating sets were exhibited for the Lie algebras of vector fields on the smooth quadric $SL(2,\CC)$, the quadratic cone $\{(x,y,z)\in\CC^3\mid xy = z^2\}$ and non-singular Danielewski surfaces.
An important feature of these constructions is that the generating vector fields are \emph{complete}, i.e. their flow maps exist for all complex times. 
This allows to find a finite number of one-parameter subgroups in the group of holomorphic automorphisms of these varieties that generate a subgroup acting infinitely transitively on them (for more detailed discussion, see~\cite[Section~4]{beldiev2025liealgebrapolynomialvector}).
The result of Andrist for $\Vect(\mathbb{C}^n)$ was sharpened in a recent paper by the first author~\cite{beldiev2025liealgebrapolynomialvector} who established that this Lie algebra is 2-generated.
However, the exhibited generators are not complete, so the question of generating $\Vect(\mathbb{C}^n)$ by only two complete vector fields remained open~\cite[Problem 4.3]{beldiev2025liealgebrapolynomialvector}.

Beyond 2-generation, there exists an even stronger property: a Lie algebra $\mathcal{L}$ is said to be \emph{$1.5$-generated} if, for every nonzero $x \in \mathcal{L}$, there exists $y \in \mathcal{L}$ such that $x$ and $y$ generate~$\mathcal{L}$.
Proving this property is more challenging as it is not sufficient to just exhibit explicit generators. 
Ionescu proved $1.5$-generatedness for finite dimensional complex simple and real semisimple algebras~\cite{Ion}.
We are not aware of any such result for Lie algebras of vector fields.

The main results of this paper are the following:
\begin{itemize}
    \item We prove that the Lie algebra of vector fields of constant divergence on $\mathbb{K}^n$ is $1.5$-generated, where $\mathbb{K}$ is a field of zero characteristic (Theorem~\ref{thm:constant}).
    \item We show that $\Vect(\mathbb{C}^n)$ can be generated by two complete vector fields (Corollary~\ref{cor:complete}). 
    Moreover, we show that any complete vector field on $\mathbb{C}^n$ with nonconstant divergence can be extended to a set of two complete generators (Theorem~\ref{thm:complete}).
\end{itemize}

Our proofs rely on two key ingredients. 
First, we show that a generalized Euler derivation 
\[
\alpha_1 x_1 \partial_{x_1} + \ldots + \alpha_n x_n \partial_{x_n}
\]
can be used to ``split'' any other vector field into homogeneous components with respect to the natural $\mathbb{Z}^n$-grading.
Second, we prove that any nonzero vector field, after a suitable invertible change of coordinates, contains sufficiently many homogeneous components to generate the whole Lie algebra.

The rest of the paper is organized as follows.
Section~\ref{seq:grading} recalls the main notions and constructions used in the paper and contains a proof of the aforementioned ``splitting'' property of the generalized Euler derivation.
Section~\ref{seq:1.5} proves the $1.5$-generatedness of the Lie algebra of vector fields of constant divergence on $\mathbb{K}^n$.
We use some of the some intermediate results from this proof to show, in Section~\ref{seq:complete}, that $\Vect(\mathbb{C}^n)$ can be generated by two complete vector fields.

\paragraph{Acknowledgements}
The first author is grateful to his academic supervisor Ivan Arzhantsev for posing the problem and useful discussions.

%\section{Preliminaries and $\mathbb Z^n$-grading on $\Vect(\AAA^n)$}
\section{Preliminaries and auxiliary statements}
\label{seq:grading}\

Throughout the note, $\KK$ is the ground field of characteristic zero, and $\AAA^n$ stands for the $n$-dimensional affine space over $\KK$.

\begin{definition}[Generators of Lie algebra]
    Let $\mathcal{L}$ be a Lie algebra, and consider a subset $S \subseteq \mathcal{L}$.
    The minimal (with respect to inclusion) Lie subalgebra of $\mathcal{L}$ containing $S$ is called the Lie subalgebra \emph{generated by $S$} and is denoted by $\operatorname{Lie}(S)$.
    In other words, $\operatorname{Lie}(S)$ consists of all elements that can be obtained from the elements of $S$ by a finite composition of Lie brackets and linear combinations.

    We say that $\mathcal{L}$ is \emph{generated} by $S\subseteq \mathcal{L}$ if $\operatorname{Lie}(S) = \mathcal{L}$.
    For an integer $k$, we say that  a Lie algebra $\mathcal{L}$ is  \emph{$k$-generated} if it can be generated by a subset of cardinality $k$.
\end{definition}

Consider the Lie algebra $\Vect(\mathbb A^n)$ of algebraic (in other words, polynomial) vector fields on the affine space $\KK^n$ with coordinates $x_1, x_2, \ldots, x_n$. 
Denoting the partial derivative operator with respect to $x_i$ by $\partial_{x_i}$ for every $1 \leqslant i \leqslant n$, we can write this algebra explicitly as
\[
\Vect(\mathbb A^n) = \left\{ f_1 \partial_{x_1} + f_2 \partial_{x_2} + \ldots + f_n \partial_{x_n}\,\middle|\, f_1, f_2, \ldots, f_n\in\KK[x_1, x_2, \ldots, x_n] \right\},
\]
and the Lie bracket is given by
\[
[ f \partial_{x_i}, g \partial_{x_j}] = f \frac{\partial g}{\partial x_i} \partial_{x_j} - g\frac{\partial f}{\partial x_j} \partial_{x_i}
\]
for any $f,g\in\KK[x_1,x_2,\ldots, x_n]$ and for any $1\leqslant i,j\leqslant n$. 
%From now on, we will usually write $\partial_{x_i}$ instead of $\frac{\partial}{\partial_{x_i}}$.
The Lie algebra $\Vect(\AAA^n)$ is commonly denoted by $W_n$.

\begin{definition}[Divergence of a vector field]
The \emph{divergence} $\Div(\delta)$ of a vector field $\delta = f_1\partial_{x_1} + \ldots + f_n\partial_{x_n}$ is defined by
\[
\Div(\delta) = \frac{\partial f_1}{\partial x_1} + \ldots + \frac{\partial f_n}{\partial x_n}.
\]
\end{definition}

\begin{comment}
The Lie algebra $W_n = \Vect(\AAA^n)$ contains two classical subalgebras, namely the subalgebras of vector fields with constant and zero divergence. Recall that the divergence $\Div(\delta)$ of a vector field $\delta = f_1\partial_{x_1} + \ldots + f_n\partial_{x_n}$ is defined by
\[
\Div(\delta) = \frac{\partial f_1}{\partial x_1} + \ldots + \frac{\partial f_n}{\partial x_n}.
\]
\end{comment}

The Lie subalgebras of $\Vect(\AAA^n)$ with constant and zero divergence, respectively, are denoted by $\Vect^c(\AAA^n)$ and $\Vect^0(\AAA^n)$ and defined as follows:
\begin{align*}
\Vect^c(\AAA^n) &= \{\delta \in \Vect(\AAA^n) \mid \Div(\delta) \in \KK\},\\
\Vect^0(\AAA^n) &= \{\delta \in \Vect(\AAA^n) \mid \Div(\delta) = 0\}.
\end{align*}
The fact that these subspaces of $\Vect(\AAA^n)$ are Lie subalgebras follows from the formula 
\[
\Div([\delta_1, \delta_2]) = \delta_1(\Div(\delta_2)) - \delta_2(\Div(\delta_1)).
\]

The algebra $\Vect^0(\AAA^n)$ is a subspace of $\Vect^c(\AAA^n)$ of codimension $1$. Explicitly, $$\Vect^c(\AAA^n) = \langle x_1\partial_{x_1} + \ldots + x_n\partial_{x_n}\rangle \oplus \Vect^0(\AAA^n).$$

The Lie algebras $\Vect^c(\AAA^n)$ and $\Vect^0(\AAA^n)$ are invariant under regular automorphisms (see, for example, \cite[Proposition 4.5]{KR}) of $\AAA^n$, so their definitions do not depend on the chosen coordinates.

The algebras $\Vect^c(\AAA^n)$ and $\Vect^0(\AAA^n)$ are of special interest since they emerge naturally in various situations. 
For example, $\Vect^c(\AAA^n)$ is known to be the Lie algebra of the ind-group $\Aut(\AAA^n)$ of regular automorphisms of the affine space, while $\Vect^0(\AAA^n)$ is the Lie algebra of the ind-group $\SAut(\AAA^n)$ of regular automorphisms with constant Jacobian (see, for example, \cite[Proposition 15.7.2]{FK}).

In the rest of this section, we discuss the standard $\mathbb Z^n$-grading on $W_n$ and show that for any $\delta \in W_n$ there exists $D \in W_n$ such that the Lie algebra generated by $\delta$ and $D$ contains all $\mathbb Z^n$-homogeneous components of $\delta$.

\begin{notation}[$\mathbb{Z}^n$-grading on $W_n$]
Recall that the standard $\mathbb Z^n$-grading on the polynomial ring $\KK[x_1, x_2, \ldots, x_n]$ induces a $\mathbb Z^n$-grading on $W_n$. 
This $\mathbb Z^n$-grading is defined on monomials by the following formula:
\[
\deg_{\mathbb Z^n}(x_1^{k_1}x_2^{k_2}\ldots x_n^{k_n}\partial_{x_i}) := (k_1, k_2, \ldots, k_i - 1, \ldots, k_n).
\]
\end{notation}
For $k = (k_1, k_2, \ldots, k_n)$, denote by $V_k$ the subspace of $W_n$ consisting of all elements of $\mathbb Z^n$-degree $k$. 
One can show
that $\dim V_k = n$ if all $k_1, k_2, \ldots, k_n$ are non-negative and $\dim V_k = 1$ if one of $k_1, k_2, \ldots, k_n$ is equal to $-1$ and the others are non-negative; otherwise, $\dim V_k = 0$.

Denote by $\mathfrak t$ the commutative $n$-dimensional Lie subalgebra of $W_n$ spanned by all derivations of the form $x_i\partial_{x_i}$, $i = 1, 2,\ldots, n$. 
The subspaces $V_k$ are precisely the weight subspaces of $\mathfrak t$ acting on $W_n$ by adjoint action, that is: 
\[
[x_j\partial_{x_j}, x_1^{k_1}x_2^{k_2}\ldots x_n^{k_n}\partial_{x_i}] = 
{\begin{cases}
 k_j x_1^{k_1}x_2^{k_2}\ldots x_n^{k_n}\partial_{x_i}, \text{ if } i \neq j,\\
 (k_i - 1) x_1^{k_1}x_2^{k_2}\ldots x_n^{k_n}\partial_{x_i}, \text{ if } i = j.
\end{cases}}
%(k_j - \delta_{ij})x_1^{k_1}x_2^{k_2}\ldots x_n^{k_n}\partial_{x_i},
\]
%where $\delta_{ij}$ is the Kronecker delta.

Now, we are going to study the weight decomposition of $W_n$ induced by just one element of $\mathfrak t$. Let us introduce the following notation.

\begin{notation}\label{not:eular}
    Let $\balpha = (\alpha_1, \ldots, \alpha_n) \in \KK^n$.
    For a monomial $x_1^{e_1}\ldots x_n^{e_n}\partial_{x_i}$, we define its $\balpha$-weight to be $\alpha_1 e_1 + \ldots + \alpha_n e_n - \alpha_i$.

    We also denote $D_{\balpha} := \alpha_1x_1 \partial_{x_1} + \ldots + \alpha_n x_n \partial_{x_n}$.
\end{notation}

%The following lemma is an easy statement from linear algebra.

\begin{lemma}\label{lem:eular_aux}
    Let $\balpha \in \KK^n$ and consider a polynomial vector field $\delta$ on $\AAA^n$.
    Let $w_1, \ldots, w_N$ be the possible $\balpha$-weights of the terms in $\delta$.
    We write $\delta = \delta_1 + \ldots + \delta_N$, where $\delta_i$ is the sum of the terms of weight $w_i$.
    Then the vector space $\langle \delta, [D, \delta], [D, [D, \delta]], \ldots \rangle$ contains $\delta_1, \ldots, \delta_N$.
\end{lemma}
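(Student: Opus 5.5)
The plan is to reduce the statement to the invertibility of a Vandermonde matrix. Throughout, $D$ denotes $D_{\balpha}$, and the weights $w_1, \ldots, w_N$ are pairwise distinct; this is implicit in grouping the terms of $\delta$ by weight.

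The first step is to check that each monomial vector field is an eigenvector of $\operatorname{ad} D$, with eigenvalue equal to its $\balpha$-weight. By linearity, the displayed formula for $[x_j\partial_{x_j}, x_1^{k_1}\ldots x_n^{k_n}\partial_{x_i}]$ gives
\[
[D, x_1^{e_1}\ldots x_n^{e_n}\partial_{x_i}] = (\alpha_1 e_1 + \ldots + \alpha_n e_n - \alpha_i)\, x_1^{e_1}\ldots x_n^{e_n}\partial_{x_i}.
\]
Consequently $[D, \delta_i] = w_i \delta_i$ for each $i$, and by induction $(\operatorname{ad} D)^m \delta = \sum_{i=1}^N w_i^m \delta_i$ for every $m \geqslant 0$.

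The second step takes the first $N$ of these iterates, for $m = 0, 1, \ldots, N-1$. In the span of $\delta_1, \ldots, \delta_N$ they are expressed through the $N\times N$ Vandermonde matrix $(w_i^m)$. Since the $w_i$ are pairwise distinct, its determinant $\prod_{i<j}(w_j - w_i)$ is nonzero, so the matrix is invertible over $\KK$. Inverting it writes each $\delta_i$ as a $\KK$-linear combination of $\delta, [D,\delta], \ldots, (\operatorname{ad} D)^{N-1}\delta$, which proves the claim.

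An alternative to inverting the matrix is to apply the polynomial $\prod_{j\neq i}(\operatorname{ad}D - w_j)/(w_i - w_j)$ to $\delta$, which isolates $\delta_i$ directly. There is no real obstacle here. The only points needing care are that distinctness of the $w_i$ is by construction, and that some $w_i$ may be zero; this is harmless because the $m=0$ term $\delta$ itself is included in the span.
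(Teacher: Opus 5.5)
Your proposal is correct and follows essentially the same route as the paper's proof: you show that $\operatorname{ad}(D_{\balpha})^m\delta=\sum_i w_i^m\delta_i$ and then invert the nonsingular Vandermonde matrix for $m=0,\ldots,N-1$. Your explicit eigenvector check and the Lagrange-interpolation variant $\prod_{j\neq i}(\operatorname{ad}D-w_j)/(w_i-w_j)$ are valid elaborations of that same argument.
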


\begin{proof}
    Observe that, for every $r \geqslant 0$, we have
    \[
    \operatorname{ad}(D_{\balpha})^r (\delta) = \sum\limits_{i = 1}^N w_i^r \delta_i.
    \]
    Since all $w_i$'s are distinct, the right hand sides of this equality for $r = 0, \ldots, N - 1$ form a nonsingular Vandermonde matrix multiplied by a vector $(\delta_1, \ldots, \delta_N)^T$.
    By inverting the matrix, we show that $\delta_1, \ldots, \delta_N$ belong to the vector space spanned by $\operatorname{ad}(D_{\balpha})^r$ for $0 \leqslant r < N$. 
    This finishes the proof.
\end{proof}

Now, we are going to show that the Lie algebra generated by an arbitrary vector field $\delta \in W_n$ and a suitably chosen $D\in \mathfrak t$ contains all $\mathbb Z^n$-homogeneous components of $\delta$.

\begin{lemma}\label{lem:eular}
    Let $\delta$ be a polynomial vector field on $\AAA^n$.
    Then there exists a nonempty Zariski open subset $U \subset \AAA^n$ such that, for every $\balpha := (\alpha_1, \ldots, \alpha_n) \in U$, the vector space
    \[
    \langle \delta, [D_{\balpha}, \delta], [D_{\balpha}, [D_{\balpha}, \delta]], \ldots \rangle
    \]
    contains all $\mathbb{Z}^n$-homogeneous components of $\delta$.
\end{lemma}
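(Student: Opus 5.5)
Since $\delta$ is a polynomial vector field, it has finitely many nonzero $\mathbb{Z}^n$-homogeneous components. Write $\delta = \sum_{k \in S} \delta^{(k)}$, where $S \subset \mathbb{Z}^n$ is the finite set of $\mathbb{Z}^n$-degrees $k$ with $\delta^{(k)} \in V_k$ nonzero. The plan is to choose $\balpha$ so that the $\balpha$-weight separates distinct $\mathbb{Z}^n$-degrees, and then apply Lemma~\ref{lem:eular_aux}.

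The key observation is that the $\balpha$-weight of a monomial $x_1^{e_1}\ldots x_n^{e_n}\partial_{x_i}$ is the dot product $\langle \balpha, k\rangle$, where $k = (e_1, \ldots, e_i - 1, \ldots, e_n)$ is the monomial's $\mathbb{Z}^n$-degree. This follows directly from Notation~\ref{not:eular} and the definition of the grading; equivalently, $[D_{\balpha}, v] = \langle \balpha, k\rangle v$ for every $v \in V_k$, by linearity of the displayed weight formula for $\mathfrak t$. Consequently, every $\mathbb{Z}^n$-homogeneous component $\delta^{(k)}$ lies entirely inside a single $\balpha$-weight piece, namely the one of weight $\langle \balpha, k\rangle$.

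Now define
\[
U := \{\balpha \in \AAA^n \mid \langle \balpha, k - k'\rangle \neq 0 \text{ for all distinct } k, k' \in S\}.
\]
This is the complement of a finite union of hyperplanes, each defined by a nonzero linear form because $k - k' \neq 0$. Since $\KK$ is infinite (it has characteristic zero), $U$ is a nonempty Zariski open subset of $\AAA^n$. For $\balpha \in U$, the map $k \mapsto \langle \balpha, k\rangle$ is injective on $S$. Hence the pieces $\delta_1, \ldots, \delta_N$ of Lemma~\ref{lem:eular_aux} coincide exactly with the components $\delta^{(k)}$, $k \in S$, and that lemma shows they lie in the span of the iterated brackets $\operatorname{ad}(D_{\balpha})^r(\delta)$. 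If $S$ is empty, that is $\delta = 0$, the statement is trivial and any $U$ works.

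There is no serious obstacle. The only points that need care are identifying the $\balpha$-weight with $\langle \balpha, \deg_{\mathbb{Z}^n}\rangle$, including the $-\alpha_i$ shift for the $\partial_{x_i}$ factor, and checking that each of the finitely many excluded hyperplanes is proper, so that $U$ is nonempty.
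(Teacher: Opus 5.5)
Your proof is correct and follows essentially the same route as the paper: identify the \(\balpha\)-weight with the scalar product of \(\balpha\) and the \(\mathbb{Z}^n\)-degree, take \(U\) to be the complement of the finitely many hyperplanes \((\balpha, \bomega_i - \bomega_j) = 0\), and apply Lemma~\ref{lem:eular_aux}. The only cosmetic difference is how nonemptiness of \(U\) is argued: the paper exhibits a point off each individual hyperplane (a standard basis vector) and intersects the resulting open sets, while you use that \(\KK\) is infinite, so a finite union of proper hyperplanes cannot cover \(\AAA^n\).
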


\begin{proof}
    Let $\bomega_1, \ldots, \bomega_N$ be the $\ZZ^n$-degrees appearing in $\delta$. 
    For any $\balpha \in \KK^n$, the $\balpha$-weights of the corresponding monomials are equal to the scalar products $(\balpha, \bomega_1), \ldots, (\balpha, \bomega_N)$.
    Therefore, due to~\Cref{lem:eular_aux}, it is sufficient to ensure that these scalar products are distinct.
    Every inequality $(\balpha, \bomega_i - \bomega_j) \neq 0$ defines a open subset which is nonempty as we can take $\alpha$ to be the $s$-the standard vector, where $s$ is the index of any nonzero component of $\bomega_i - \bomega_j$.
    The intersection of these open subsets gives the desired $U$.
\end{proof}

\section{$1.5$-generation of the Lie algebra $\Vect^c(\mathbb A^n)$}\label{seq:1.5}\

Throughout this section we will assume that $n \geqslant 2$.
The Lie algebra $\Vect^c(\AAA^1)$ is two-dimensional, so it is $1.5$-generated just as a vector space.

\begin{notation}
    Let $\delta = f_1 \partial_{x_1} + \ldots + f_n \partial_{x_n}$ be a polynomial vector field on $\AAA^n$, that is, $f_1, \ldots, f_n \in \KK[x_1, \ldots, x_n] =: \KK[\bx]$.
    \begin{itemize}
        \item Then $f_i$ will be referred to as the $\partial_{x_i}$-\emph{component} of $\delta$.
        \item For every monomial $m \in \KK[x_1, \ldots, x_n]$, the coefficient at $m$ in $f_i$ will be called the \emph{coefficient} of $\delta$ at $m\partial_{x_i}$.
        \item We define the degree of $\delta$ as $\deg_{\bx} \delta := \max\limits_{i=1, \ldots, n} \deg f_i - 1$.
    \end{itemize}
\end{notation}

Now, we are going to prove a series of lemmas.

\begin{lemma}\label{high_degree}
    Let $\delta = f_1\partial_{x_1} + \ldots + f_n\partial_{x_n}$ be a non-zero polynomial vector field on $\AAA^n$. 
    Then, for any positive integer $N$, there exists an invertible polynomial change of variables $y_1 = y_1(\bx), \ldots, y_n = y_n(\bx)$ such that $\deg_{\by} \delta > N$.
\end{lemma}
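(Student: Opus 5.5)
We will use triangular (de Jonquières) changes of variables. Choose $y_i = x_i$ for $i \neq j$ and $y_j = x_j + h(x_1, \ldots, \widehat{x_j}, \ldots, x_n)$, where $h$ is a polynomial of high degree to be fixed later. Since $h$ does not involve $x_j$, this map is an automorphism with inverse $x_j = y_j - h(y_1, \ldots, \widehat{y_j}, \ldots, y_n)$. Under the change, $\delta$ acts on $y_i$ by $\delta(y_i) = f_i$ for $i \neq j$, and on $y_j$ by
\[
\delta(y_j) = f_j + \sum_{i \neq j} f_i \frac{\partial h}{\partial x_i}.
\]
Each of these must then be rewritten in the $\by$ variables by substituting $x_j = y_j - h(\by)$. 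The $\partial_{y_i}$-component of $\delta$ in the new coordinates is exactly $\delta(y_i)$ expressed through $\by$. So it suffices to make one of these polynomials have $\by$-degree larger than $N+1$.

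The first step is a case split on whether $\delta$ has a nonzero component $f_i$ that genuinely depends on some variable $x_j$ with $j \neq i$.

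In the case where $f_i$ depends on $x_j$ for some $j \neq i$, take $h = x_i^M$ with $i \neq j$ (recall $n \geqslant 2$). Then
\[
f_i(\bx) = f_i(y_1, \ldots, y_j - y_i^M, \ldots, y_n).
\]
Write $f_i = \sum_k c_k x_j^k$ with $c_k$ independent of $x_j$, and let $d \geqslant 1$ be the top $x_j$-degree. For $M$ large, the highest power of $y_i$ appearing is $y_i^{Md}$ times the top $y_i$-coefficient of $c_d$, and no other term reaches that degree. Hence the degree is at least $Md$, which exceeds $N+1$ for large $M$.

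In the other case every $f_i$ depends only on $x_i$. Because $\delta \neq 0$, some $f_i \neq 0$. Pick $j \neq i$ and $h = x_i^M$. The $\partial_{y_j}$-component becomes
\[
f_j(x_j) + M x_i^{M-1} f_i(x_i).
\]
After substituting $x_i = y_i$ and $x_j = y_j - y_i^M$, the term $M y_i^{M-1} f_i(y_i)$ has degree $M - 1 + \deg f_i$ in $y_i$. The term $f_j(y_j - y_i^M)$ contributes only powers of $y_i$ that are multiples of $M$. Taking $M$ large and not dividing $M - 1 + \deg f_i$, no cancellation occurs, so the degree grows with $M$. This finishes the argument.

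The main obstacle is the non-cancellation bookkeeping for the leading terms after substitution. Everything else is a direct computation.
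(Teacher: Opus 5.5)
Your setup and your first case are correct. They follow the same route as the paper: a triangular substitution $y_j = x_j \pm x_i^M$, with a case split according to whether some component depends on a variable other than its own. The gap is in the last step of your second case. There you need $M \nmid M-1+\deg f_i$, i.e.\ $M \nmid \deg f_i - 1$. For large $M$ this holds when $\deg f_i = 0$ or $\deg f_i \geq 2$. When $\deg f_i = 1$, however, $M-1+\deg f_i = M$ is always divisible by $M$, so the condition can never be met, and cancellation really can happen.

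For example, take $\delta = x_1\partial_{x_1} + 3x_2\partial_{x_2}$ on $\AAA^2$ with $i=1$, $j=2$. The $\partial_{y_2}$-component is $3(y_2 - y_1^M) + M y_1^M = 3y_2 + (M-3)y_1^M$. For $M=3$ this is just $3y_2$, so $\delta = y_1\partial_{y_1} + 3y_2\partial_{y_2}$ still has degree $0$. This sub-case contains every diagonal affine field $\sum_l (a_l x_l + b_l)\partial_{x_l}$, including the fields $D_{\balpha}$ used later in the paper, so it cannot be dismissed.

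The fix is a coefficient comparison rather than a divisibility argument. Write $f_i = a x_i + b$ with $a \neq 0$, and let $c_1$ be the coefficient of $x_j$ in $f_j$. The pure powers of $y_i$ in $f_j(y_j - y_i^M)$ are $(-1)^k c_k y_i^{Mk}$, so the only one that can meet $y_i^M$ is $-c_1 y_i^M$. Hence the coefficient of $y_i^M$ in the $\partial_{y_j}$-component is $Ma - c_1$. In characteristic zero this vanishes for at most one value of $M$, so for all sufficiently large $M$ the component has degree at least $M$.

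This ``only finitely many bad exponents'' step is exactly what the paper uses in its second case. There it notes that $ck$ can coincide with one of the finitely many subset sums of coefficients of $f_n(y_1,\ldots,y_{n-1},y_n+z)$ for only finitely many $k$. With this replacement your proof is complete.
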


\begin{proof}
    We fix a positive integer $k$. 
    By reordering variables, if necessary, we will assume that $f_{n - 1}\neq 0$.
    We consider the following change of variables: 
    \[
    y_i = x_i \text{ for } 1 \leqslant i < n \quad \text{ and }\quad y_n = x_n - x_{n - 1}^k.
    \]
    In the new variables, we have
    \[
    \partial_{x_i} = \partial_{y_i}\text{ for } i \neq n - 1\quad \text{ and } \quad \partial_{x_{n-1}} = \partial_{y_{n-1}} - ky_{n-1}^{k-1}\partial_{y_n}.
    \]
    Therefore, $\delta$ can be written in the new coordinates as follows:
    \begin{equation}\label{change_1}
    \delta = f_1\partial_{y_1} + \ldots + f_{n-2}\partial_{y_{n-2}} + f_{n-1}(y_1,\ldots, y_{n-1}, y_n + y_{n-1}^k)(\partial_{y_{n-1}} - ky_{n-1}^{k-1}\partial_{y_n}) + f_n\partial_{y_n}.
    \end{equation}
    The $\partial_{y_{n - 1}}$-component of $\delta$ is, therefore, equal to $f_{n-1}(y_1, \ldots y_{n-1}, y_n + y_{n-1}^k)$. 
    If $f_{n-1}(x_1, \ldots, x_n)$ depends nontrivially on $x_n$, the degree of this polynomial can be made arbitrarily large by choosing sufficiently large $k$.
    
    If $f_{n - 1}$ does not involve $x_n$, we have $f_{n-1}(y_1, \ldots y_{n-1}, y_n + y_{n-1}^k) = f_{n - 1}(y_1, \ldots, y_n)$.
    We consider the $\partial_{y_n}$-component of $\delta$:
    \begin{equation}\label{eq:yn_component}
    -k f_{n - 1}(y_1, \ldots, y_{n}) y_{n - 1}^{k - 1} + f_n(y_1, \ldots y_{n-1}, y_n + y_{n-1}^k).
    \end{equation}
    Let $m$ be any monomial $m$ in $f_{n - 1}(y_1, \ldots, y_{n})$ and we denote the coefficients at it by~$c$. 
    There are only finitely many values of $k$ such that $ck$ can be equal to the sum of a subset of coefficients of $f_n(y_1, \ldots y_{n-1}, y_n + z)$.
    Therefore, for any larger $k$, \eqref{eq:yn_component} contains a monomial $my_{n - 1}^{k - 1}$ with nonzero coefficient.
    By taking $k$ large enough, we can make the degree of this monomial arbitrarily high.
\end{proof}

By~\Cref{high_degree} and the pigeonhole principle, we can assume that the degree of at least one of $f_i$ with respect to at least one of the variables $x_j$ is arbitrarily high, i.e., there is a monomial in $f_i$ divisible by an arbitrarily high power of $x_j$. 
In the next lemma, we show that this monomial can be assumed to be divisible by an arbitrarily high power of all variables $x_1, x_2, \ldots, x_n$ simultaneously. 

\begin{lemma}\label{high_degree_all}
    Let $\delta = f_1\partial_{x_1} + \ldots + f_n\partial_{x_n}$ be a non-zero polynomial vector field on $\AAA^n$ and $k$ be a positive integer. 
    Assume that there exist $1 \leqslant i, j \leqslant n$ such that $\deg_{x_j} f_i \geqslant nk$.
    Then there exists an invertible linear change of variables $y_1 = y_1(\bx), y_2 = y_2(\bx), \ldots, y_n = y_n(\bx)$ such that the expression of $\delta$ in $y$-coordinates contains a monomial divisible by $(y_1 y_2 \ldots y_n)^k$.
\end{lemma}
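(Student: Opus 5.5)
We will use a generic linear change of coordinates of the special form $x_j = y_j$ and $x_l = y_l + c_l y_j$ for $l \neq j$, with constants $c_l \in \KK$ still to be chosen. Possibly we will first apply a generic invertible linear map to all coordinates instead. Under such a change we have $\partial_{y_j} = \partial_{x_j} + \sum_{l\neq j} c_l \partial_{x_l}$ and $\partial_{y_l} = \partial_{x_l}$ for $l \neq j$. Conversely, $\partial_{x_l}$ is expressed through the $\partial_{y}$'s with constant coefficients. Hence the $\partial_{y_i}$-component of $\delta$ in the new coordinates is a fixed linear combination, with constant coefficients, of the polynomials $f_l(y_1 + c_1 y_j, \ldots, y_j, \ldots, y_n + c_n y_j)$. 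The component $\partial_{y_i}$ receives $f_i$ with coefficient $1$.

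The plan is as follows. Let $d := \deg_{x_j} f_i \geqslant nk$, and let $g(\bx)$ be the part of $f_i$ consisting of monomials of highest total degree $D \geqslant d$. Any nonzero homogeneous polynomial of degree $D \geqslant nk$ becomes, after a generic linear change of variables, a polynomial containing \emph{every} monomial of degree $D$ with nonzero coefficient. To see this, write $g(A\by)$. The coefficient at a fixed monomial $\by^{\mathbf{e}}$ is a polynomial in the entries of $A$. This polynomial is not identically zero: for instance, if $g(\bx_0)\neq 0$, put the rows of $A$ as a suitable perturbation and use that the map $g \mapsto$ (coefficients of $g\circ A$) spans, by a polarization/Vandermonde argument, all of $\KK[\by]_D$. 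So each coefficient is a nonzero polynomial in $A$, and finitely many nonvanishing conditions define a nonempty open set of $A$. Since $D \geqslant nk$, the monomial $(y_1\cdots y_n)^k y_1^{D-nk}$ is among the degree-$D$ monomials. It remains to handle the components: the $\partial_{y_{i'}}$-component of the new field is $\sum_l b_{i'l}\, f_l(A\by)$, where $B=(b_{i'l})=A^{-1}$. Its top-degree-$D$ part is $\sum_{l} b_{i'l}\, g_l(A\by)$, where $g_l$ is the degree-$D$ part of $f_l$ (possibly zero). Choose $i'$ and consider $h := \sum_l b_{i'l} g_l$. This is a nonzero homogeneous form for a generic choice, since the $g_l$ are not all zero and the row of $B$ is generic. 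Then the coefficient at $(y_1\cdots y_n)^k y_1^{D-nk}$ of $h(A\by)$ is a polynomial expression in the entries of $A$ (with $B=A^{-1}$ rational in $A$). We want it to be not identically zero on $GL_n$.

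The main obstacle is exactly this nonvanishing of a coefficient, since $A$ appears both inside $g_l$ and through $A^{-1}$. To handle it, we first fix $B$'s relevant row by choosing $A$ near a matrix where things decouple. Concretely, we first note that there is an $A_0$ making $h_0 := \sum_l (A_0^{-1})_{i'l} g_l \neq 0$. Then we perturb $A = A_0 P$ with $P$ generic. The mixing of components then changes by $P^{-1}$, but it is cleaner to argue via the identity: the full field $\sum_l g_l\partial_{x_l}$ (degree $D$ part) is nonzero, and after the generic change $\by$ its components cannot all miss the monomial, since the set of matrices where they all vanish is Zariski closed. We then exhibit one matrix outside it. Using $\operatorname{char}\KK = 0$ makes $\KK$ infinite, so the open set has a $\KK$-point.

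An alternative fallback, closer to the hypothesis $\deg_{x_j} f_i \geqslant nk$ (which suggests the intended route only uses the $x_j$-degree rather than total degree), is to use the triangular substitution $x_l = y_l + y_j$ for $l\neq j$. This spreads a monomial $x_j^{d}m'$ into terms $y_j^{a}\prod_{l\neq j} y_l^{b_l}$ with all exponents splitting $d$. Binomial coefficients are positive, so in characteristic zero we get no cancellation among the top $x_j$-degree terms. Choosing the splitting with each exponent $\geqslant k$ is possible since $d\geqslant nk$. The extra $\partial$-mixing from $\partial_{y_j}=\sum_l\partial_{x_l}$ only affects the $\partial_{y_j}$-component. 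So we pick $i\ne j$ when possible, and otherwise use a generic scaling $x_l = y_l + c_l y_j$ to avoid cancellation.
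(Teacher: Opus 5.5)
\textbf{Verdict.} Your main route differs from the paper's and can be completed, but as written it has a genuine gap. The step you yourself call the main obstacle is never proved. Separately, the opening shear and the fallback do not work.

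\textbf{The main route and its gap.} You pass to the top-degree homogeneous part $g$ of $f_i$, of degree $D\geqslant nk$, and apply a generic $A\in GL_n(\KK)$. This only needs $\deg f_i\geqslant nk$, a weaker hypothesis than the paper's. However, both nonvanishing steps are asserted rather than proved.

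First, you claim the coefficient of a degree-$D$ monomial in $g(A\by)$ is a nonzero polynomial in the entries of $A$, but only gesture at ``polarization/Vandermonde''. A complete argument: specialize $A$ to the rank-one matrix whose columns all equal some $v$ with $g(v)\neq 0$. Then $g(A\by)=g(v)(y_1+\cdots+y_n)^D$, whose multinomial coefficients are all nonzero in characteristic zero. Multiplying by $\det A$ then gives an invertible good $A$.

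Second, and more seriously, the mixing obstacle is left open. ``We then exhibit one matrix outside it'' is never carried out, and the detour through $h=\sum_l b_{i'l}g_l$ and $A=A_0P$ goes nowhere. The missing observation is that the obstacle is illusory. For a fixed monomial $\mu$, the column of coefficients of $\mu$ in the $\partial_{y_1},\ldots,\partial_{y_n}$-components equals $A^{-1}$ times the column of coefficients of $\mu$ in $f_1(A\by),\ldots,f_n(A\by)$. Since $A^{-1}$ is invertible, one column is nonzero iff the other is. Take $\mu=(y_1\cdots y_n)^k y_1^{D-nk}$: the $i$-th entry of the second column is the coefficient of $\mu$ in $g(A\by)$, which is nonzero for generic $A$.

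\textbf{Comparison with the paper.} The paper builds this observation into its reformulation of the goal as $\partial_{y_1}^k\cdots\partial_{y_n}^k(\delta)\neq 0$, which is insensitive to constant changes of frame. It then uses only the one-parameter family $\partial_{y_1}=\partial_{x_1}$, $\partial_{y_s}=\partial_{x_1}+t\partial_{x_s}$. At $t=0$ the operator is $\partial_{x_1}^{nk}$, which does not kill $f_i$ exactly because $\deg_{x_1}f_i\geqslant nk$. Openness in $t$ then yields a good $t\neq 0$. Both arguments degenerate to a rank-one situation. Yours uses total degree and all of $GL_n$. The paper's uses the $x_j$-degree and a single parameter, and avoids the reduction to the homogeneous part.

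\textbf{The shear and the fallback.} The shear in your opening paragraph and the fallback should be dropped.
\begin{itemize}
\item With $x_j=y_j$, $x_l=y_l+c_ly_j$, the variable $x_j$ is not spread at all, so this cannot work. For $n=2$, $k=1$, $j=1$, the field $\delta=x_1^2\partial_{x_1}$ becomes $y_1^2\partial_{y_1}-c\,y_1^2\partial_{y_2}$ for every $c$.
\item Even in the right direction, $x_j=y_1+\cdots+y_n$, positivity of binomial coefficients does not prevent cancellation between \emph{different} monomials of $f_i$. For example, $(x_1-x_2)^2$ becomes $y_1^2$ under $x_1=y_1+y_2$, $x_2=y_2$. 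This is why the paper keeps a free parameter $t$.
\item Your statement about the mixing is backwards. Under $x_l=y_l+c_ly_j$, the $\partial_{y_j}$-component is $f_j$, while the $\partial_{y_l}$-components for $l\neq j$ become $f_l-c_lf_j$. In any case the mixing is harmless, by the invertibility remark above.
\end{itemize}
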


\begin{proof}
    Without loss of generality, assume that $j = 1$, i.e., $\deg_{x_1} f_i \geqslant nk$. 
    The desired property of the $y$-coordinates is equivalent to $\partial_{y_1}^k\partial_{y_2}^k\ldots \partial_{y_n}^k(\delta)$ being a nonzero vector field.
    A linear change of variables is equivalent to choosing a basis in the linear span $V = \langle \partial_{x_1}$, $\partial_{x_2}$, \ldots, $\partial_{x_n}\rangle$ of all partial derivatives. 
    Consider the derivations $\partial_{y_1} = \partial_{x_1}$ and $\partial_{y_s} = \partial_{x_1} + t\partial_{x_s}$ for $s = 2, 3, \ldots, n$ and some $t\in\mathbb K$ which form a basis of $V$ for any $t \neq 0$. 
    Note that $\partial_{y_1}^k\partial_{y_2}^k\ldots \partial_{y_n}^k(\delta) \neq 0$ is an open condition for $t$ in the Zariski topology on $\mathbb K$. 
    This condition is fulfilled for $t = 0$ since $\delta$ contains a monomial divisible by $x_1^{nk}$. Therefore, the condition $\partial_{y_1}^k\partial_{y_2}^k\ldots \partial_{y_n}^k(\delta) \ne 0$ defines an open nonempty subset of $\mathbb K$ and hence is satisfied for some $t \neq 0$. 
    This concludes the proof of the lemma. 
\end{proof}

\begin{lemma}\label{high_degree_all_2}
    Let $\delta = f_1\partial_{x_1} + \ldots + f_n\partial_{x_n}$ be a non-zero polynomial vector field on $\AAA^n$ and $k$ be a positive integer.
    Assume that one of the components of $\delta$ contains a monomial divisible by $(x_1x_2 \ldots x_n)^k$
    Then there exists an invertible linear change of variables $y_1 = y_1(\bx), y_2 = y_2(\bx), \ldots, y_n = y_n(\bx)$ such that, for every $1 \leqslant i \leqslant n$, the $\partial_{y_i}$-component of $\delta$ contains a monomial divisible by $(y_1 y_2 \ldots y_n)^k$.
\end{lemma}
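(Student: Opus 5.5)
Without loss of generality the monomial divisible by $(x_1\cdots x_n)^k$ occurs in the $\partial_{x_1}$-component $f_1$. As in the proof of Lemma~\ref{high_degree_all}, I will treat a linear change of variables as a choice of a new basis of $V=\langle\partial_{x_1},\ldots,\partial_{x_n}\rangle$ together with the dual coordinates, and reduce the claim to a Zariski-open condition on the parameters of that change.

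The condition ``the $\partial_{y_i}$-component of $\delta$ contains a monomial divisible by $(y_1\cdots y_n)^k$'' can be written as
\[
\partial_{y_1}^k\cdots\partial_{y_n}^k\bigl(\delta(y_i)\bigr)\neq 0,
\]
because the $\partial_{y_i}$-component of $\delta$ is the function $\delta(y_i)$. Here the operator $\partial_{y_1}^k\cdots\partial_{y_n}^k$ is applied to a function. I will parametrize changes by invertible matrices $A\in GL_n(\KK)$ with $y=A\bx$. Then $\delta(y_i)=\sum_j a_{ij}f_j$, and $\partial_{y_s}=\sum_r b_{rs}\partial_{x_r}$ with $B=A^{-1}$. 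Fix a monomial order and clear the denominator $\det A$. For each $i$, the expression $\partial_{y_1}^k\cdots\partial_{y_n}^k(\sum_j a_{ij}f_j)$ then has coefficients that are polynomials in the entries of $A$ times powers of $1/\det A$. Its nonvanishing is therefore a Zariski-open condition $U_i$ on $GL_n(\KK)$.

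Since $GL_n(\KK)$ is irreducible and $\KK$ is infinite, it suffices to show that each $U_i$ is nonempty. Then $\bigcap_i U_i$ is nonempty, and any $A$ in it gives the required coordinates. For nonemptiness of $U_i$ I will use a permutation-type change: take $A$ to be the permutation matrix swapping $x_1$ and $x_i$, i.e.\ $y_i=x_1$, $y_1=x_i$, and $y_s=x_s$ otherwise. Then $\delta(y_i)=f_1$ expressed in the $y$'s. The set of exponents $\geq k$ in every variable is preserved by permuting variables, so $f_1$ still contains a monomial divisible by $(y_1\cdots y_n)^k$, and $A\in U_i$.

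The main obstacle is making the openness argument airtight. The operator $\partial_{y_1}^k\cdots\partial_{y_n}^k$ depends on $A^{-1}$, not on $A$, and the vanishing locus must be a genuine algebraic subset of the irreducible variety $GL_n$; I handle this by working in the coordinate ring $\KK[a_{rs},1/\det A]$. If that becomes awkward, the fallback is to mimic Lemma~\ref{high_degree_all} with a one-parameter family $A(t)$ that passes through each permutation matrix at some value of $t$ and is invertible for generic $t$. Then each condition is open in $t$ and nonempty, and finitely many nonempty open subsets of $\KK$ intersect.
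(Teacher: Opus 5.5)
Your proposal is correct and matches the paper's proof. Both use the swap $x_1\leftrightarrow x_i$ to show that each condition is satisfiable, and then intersect finitely many nonempty Zariski-open subsets of $GL(n,\KK)$. Writing the condition as $\partial_{y_1}^k\cdots\partial_{y_n}^k(\delta(y_i))\neq 0$, with coefficients in $\KK[a_{rs},1/\det A]$, makes explicit the openness claim that the paper asserts without detail.
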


\begin{proof}
    Without loss of generality, we can assume that $f_1$ contains a monomial $ax_1^{k_1}x_2^{k_2}\ldots x_n^{k_n}$ such that $k_1$, $k_2$, \ldots, $k_n \geqslant k$ (and $a \neq 0$). 
    Let $i \neq 1$ and consider the following change of variables: 
    \[
    y_1 = x_i,\quad y_i = x_1,\quad \text{and }\quad y_j = x_j  \text{for all } j\not\in \{1, i\}. 
    \]
    After this change of variables, the term $ax_1^{k_1}x_2^{k_2}\ldots x_n^{k_n}\partial_{x_1}$ of $\delta$ transforms into 
    \[
    ay_1^{k_i}y_2^{k_2}\ldots y_i^{k_1}\ldots y_n^{k_n}\partial_{y_i},
    \]
    so the $\partial_{y_i}$-component of $\delta$ contains a monomial divisible by $(y_1\ldots y_n)^{k}$. 
    The condition that the $\partial_{y_i}$-component contains a monomial divisible by $(y_1 \ldots y_n)^k$ is an open condition on the linear change of variables, that is, it defines a Zariski open subset of the algebraic group $GL(n,\mathbb K)$ of all such linear changes.
    We have just shown that this set is also nonempty, so these conditions for  $i = 1, 2, \ldots, n$ are fulfilled simultaneously on an open non-empty subset of $GL(n, \mathbb K)$.
    Taking a point in this subset finishes the proof.
\end{proof}

For any monomial $m(\bx) = x_1^{k_1}x_2^{k_2}\ldots x_n^{k_n}$, denote $\partial_m = \partial_{x_1}^{k_1}\partial_{x_2}^{k_2}\ldots \partial_{x_n}^{k_n}$.

\begin{lemma}\label{many_non_zero}
    Let $\delta$ be a polynomial vector field on $\AAA^n$ and $(m_1, i_i), (m_2, i_2), \ldots, (m_r, i_r)$ a collection of pairs, where $m_j$ is a monomial in $\KK[\bx]$ and $1 \leqslant i_j \leqslant n$.
    Assume that, for every $1 \leqslant j \leqslant r$, the $\partial_{x_{i_j}}$-component of $\partial_{m_j}(\delta)$ is nonzero.
    There there exist $a_1, \ldots, a_n \in \KK$ such that, under a translation $y_1 = x_1 - a_1, \ldots, y_n = x_n - a_n$, the coefficient of $m_j(\by)\partial_{y_{i_j}}$ in $\delta$ is nonzero for every $1 \leqslant j \leqslant r$.
\end{lemma}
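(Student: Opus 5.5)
Under a translation $y = x - a$ we have $\partial_{y_i} = \partial_{x_i}$, so the $\partial_{y_i}$-component of $\delta$ in the new coordinates is $f_i(\by + a)$. The plan is to express the relevant coefficients via Taylor's formula and then pick $a$ off the zero sets of finitely many nonzero polynomials.

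First, for a polynomial $f$ and a monomial $m = x_1^{k_1}\ldots x_n^{k_n}$, Taylor's formula in characteristic zero gives
\[
f(\by + a) = \sum_{m} \frac{(\partial_m f)(a)}{k_1!\,k_2!\ldots k_n!}\, m(\by).
\]
Hence the coefficient of $m_j(\by)\partial_{y_{i_j}}$ in $\delta$ equals $c_j\,(\partial_{m_j} f_{i_j})(a)$, where $c_j$ is the nonzero rational constant $1/\prod_s k_s!$. Here $k_s$ are the exponents of $m_j$, and $c_j$ is well defined because $\operatorname{char}\KK = 0$.

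Second, by hypothesis, $\partial_{m_j} f_{i_j}$ is exactly the $\partial_{x_{i_j}}$-component of $\partial_{m_j}(\delta)$. This holds because $\partial_{m_j}$ acts coefficientwise on $\delta$, and that component is assumed to be a nonzero polynomial $g_j \in \KK[\bx]$. Therefore the product $g = g_1 g_2 \cdots g_r$ is a nonzero polynomial.

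Third, since $\KK$ has characteristic zero, it is infinite. A nonzero polynomial over an infinite field does not vanish identically on $\KK^n$, which one sees by induction on $n$. So there is a point $a = (a_1, \ldots, a_n)$ with $g(a) \neq 0$, hence $g_j(a) \neq 0$ for every $j$. For this $a$, all $r$ coefficients $c_j\,g_j(a)$ are nonzero, as required.

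There is no serious obstacle here. The only point to verify carefully is that translations leave the frame $\partial_{y_i} = \partial_{x_i}$ unchanged, so that components do not mix; this is clear because the Jacobian of a translation is the identity.
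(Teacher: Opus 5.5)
Your proof is correct and follows essentially the same route as the paper: choose $a$ off the zero loci of the nonzero polynomials $\partial_{m_j} f_{i_j}$, then use the Taylor expansion at $a$ to see that the coefficient of $m_j(\by)\partial_{y_{i_j}}$ is a nonzero multiple of $(\partial_{m_j} f_{i_j})(a)$. Your product $g_1\cdots g_r$ together with the infinitude of $\KK$ simply makes explicit the paper's step of picking a point in the intersection of finitely many nonempty Zariski open sets.
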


\begin{proof}
    Since the $\partial_{x_{i_j}}$-component of $\partial_{m_j}(\delta)$ is a non-zero polynomial, its value is non-zero on a non-empty open subset of $\mathbb A^n$. 
    Let $(a_1, \ldots, a_n)$ be a point in the intersection of these open subsets over all $1 \leqslant j \leqslant r$.
    Consider the translation $y_j = x_j - a_j$, $j = 1,2, \ldots, n$. 
    In these new coordinates, the point $(a_1, \ldots, a_n)$ becomes the origin, so the constant coefficient of the $\partial_{x_{i_j}}$-component of $\partial_{m_j}(\delta)$ is nonzero for every $j$.
    This is equivalent to the coefficient of $m_j(\by)\partial_{y_{i_j}}$ in $\delta$ being nonzero.
\end{proof}

\begin{notation}
    In a list or a product $\hat{a}$ means that $a$ is \emph{absent}.
    For example, $x_1 \ldots \hat{x}_i \ldots x_n$ is the product of all $x_j$'s without $x_i$.
\end{notation}

\begin{proposition}\label{change_final}
    Let $\delta = f_1\partial_{x_1} + \ldots + f_n\partial_{x_n}$ be a non-zero polynomial vector field on $\AAA^n$.
    Then, for any positive integer $k$, there exists a change of variables $y_1 = y_1(\bx), \ldots, y_n = y_n(\bx)$ such that, for every $1 \leqslant i \leqslant n$, the coefficients at $\partial_{y_i}$ and at $(y_1\ldots \widehat{y_i}\ldots y_n)^k\partial_{y_i}$ in $\delta$ are non-zero.
\end{proposition}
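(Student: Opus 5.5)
We chain the preceding lemmas and finish with a translation, using \Cref{many_non_zero}.

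First, by \Cref{high_degree} choose an invertible polynomial change of variables after which $\deg_{\bx}\delta$ exceeds some large $M$. Since $\deg_{\bx}\delta > M$, some $f_i$ contains a monomial of total degree greater than $M$. Taking $M \geqslant n\cdot nk$ and applying the pigeonhole principle, some exponent in that monomial is at least $nk$, so $\deg_{x_j} f_i \geqslant nk$ for some $i,j$. Next, \Cref{high_degree_all} gives an invertible linear change after which some component contains a monomial divisible by $(x_1\ldots x_n)^k$. Then \Cref{high_degree_all_2} gives a further linear change after which, for every $i$, the $\partial_{x_i}$-component $f_i$ contains a monomial $c_i x_1^{e_{i1}}\ldots x_n^{e_{in}}$ with all $e_{il}\geqslant k$ and $c_i \neq 0$. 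All these changes are invertible, so their composition is a legitimate change of variables.

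It remains to produce the two required coefficients for each $i$, and here we use \Cref{many_non_zero} with the $2n$ pairs $(1, i)$ and $\bigl((x_1\ldots\widehat{x_i}\ldots x_n)^k, i\bigr)$ for $i=1,\ldots,n$. We must check its hypotheses, namely that $\partial_{m}(\delta)$ has nonzero $\partial_{x_i}$-component for each pair.
\begin{itemize}
\item For $m=1$ this only says $f_i\neq 0$, which holds.
\item For $m=(x_1\ldots\widehat{x_i}\ldots x_n)^k$ we need $\partial_m f_i \neq 0$. Applying $\partial_{x_l}^k$ for $l\neq i$ to the monomial $c_i x^{e_i}$ gives a nonzero multiple of $x^{e_i - k(\mathbf 1 - \mathbf e_i)}$, since each $e_{il}\geqslant k$ and the characteristic is zero. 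This resulting monomial cannot cancel against the contributions of other monomials of $f_i$, because distinct monomials that survive differentiation stay distinct. Hence $\partial_m f_i\neq 0$.
\end{itemize}
\Cref{many_non_zero} then provides a translation $y_l = x_l - a_l$ after which the coefficients of $\partial_{y_i}$ and of $(y_1\ldots\widehat{y_i}\ldots y_n)^k\partial_{y_i}$ in $\delta$ are all nonzero. Composing this translation with the earlier changes yields the required change of variables.

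The main obstacle is the bookkeeping that chains \Cref{high_degree}, the pigeonhole step and \Cref{high_degree_all}. We must choose the degree bound correctly: total degree above $n^2k$ forces some single-variable degree of at least $nk$. We must also keep every change invertible, and apply the lemmas in the right order, since the translation comes last and the lemmas' hypotheses are not translation-stable in general. The final verification via \Cref{many_non_zero} is routine.
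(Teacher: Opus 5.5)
Your proof is correct and follows the paper's argument. Like the paper, you reach, via \Cref{high_degree}, the pigeonhole step, \Cref{high_degree_all} and \Cref{high_degree_all_2}, coordinates in which every $f_i$ contains a monomial divisible by $(x_1\ldots x_n)^k$, and then apply \Cref{many_non_zero} to the same $2n$ pairs $(1,i)$ and $\bigl((x_1\ldots\widehat{x_i}\ldots x_n)^k, i\bigr)$; you simply spell out the degree bound $M \geqslant n^2k$ and the non-cancellation check that the paper leaves implicit.
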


\begin{proof}
    \Cref{high_degree_all_2} implies that, after performing a change of coordinates, we can assume that $\partial_{x_i}$-component of $(\partial_{x_1}\partial_{x_2}\ldots\partial_{x_n})^k(\delta)$ is nonzero for every $1 \leqslant i \leqslant n$.
    In particular, all components of $\delta$ itself are not zero.
    We denote $m_i = (x_1\ldots \widehat{x_i}\ldots x_n)^k$ for any $1 \leqslant i \leqslant n$ and observe that all the components are nonzero for $\partial_{m_i}(\delta)$ as well.
    We apply~\Cref{many_non_zero} to the pairs $(1, 1), \ldots, (1, n), (m_1, 1), \ldots, (m_n, n)$.
    This gives a translation $y_1 = x_1 - a_1, \ldots, y_n = x_n - a_n$ such that, for every $1 \leqslant i \leqslant n$, the coefficients at $\partial_{y_i}$ and at $(y_1\ldots \widehat{y_i}\ldots y_n)^k\partial_{y_i}$ in $\delta$ are nonzero as desired.
\end{proof}

\begin{theorem}\label{thm:constant}
    For any non-zero $\delta \in \Vect(\mathbb A^n)$ there exists $D \in \Vect^c(\mathbb A^n)$ such that the Lie algebra generated by $\delta$ and $D$ contains the whole algebra $\Vect^c(\mathbb A^n)$.
\end{theorem}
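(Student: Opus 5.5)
Since the statement is invariant under polynomial automorphisms of $\AAA^n$ ($\Vect^c$ is preserved, and generation is preserved), we first apply \Cref{change_final} with a suitably large $k$ (to be fixed, say $k\geqslant 2$). In the new coordinates, for every $i$ the coefficients of $\delta$ at $\partial_{x_i}$ and at $(x_1\ldots\widehat{x_i}\ldots x_n)^k\partial_{x_i}$ are nonzero. We then take $D = D_{\balpha}$ with $\balpha$ in the open set $U$ of \Cref{lem:eular}, additionally generic (e.g.\ $\alpha_i$ linearly independent over $\mathbb{Q}$ if possible, or just avoiding finitely many hyperplanes). Note $D_{\balpha}$ has constant divergence $\sum\alpha_i$, so $D\in\Vect^c(\AAA^n)$. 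By \Cref{lem:eular}, $\operatorname{Lie}(\delta,D)$ contains every $\ZZ^n$-homogeneous component of $\delta$. The degree of $\partial_{x_i}$ is $-e_i$ and that of $(x_1\ldots\widehat{x_i}\ldots x_n)^k\partial_{x_i}$ is $k(\mathbf 1-e_i)-e_i$; these degrees have one-dimensional weight spaces ($\dim V=1$ since one coordinate is $-1$), so $\operatorname{Lie}(\delta,D)$ contains $\partial_{x_i}$ and $m_i\partial_{x_i}$ with $m_i=(x_1\ldots\widehat{x_i}\ldots x_n)^k$ for all $i$, together with $D_{\balpha}$.

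The second step is to show that $L:=\operatorname{Lie}(\partial_{x_1},\ldots,\partial_{x_n}, m_1\partial_{x_1},\ldots,m_n\partial_{x_n}, D_{\balpha})$ equals $\Vect^c(\AAA^n)$. Brackets with $\partial_{x_j}$ act as differentiation of coefficients, so $L$ contains $x^{a}\partial_{x_i}$ for all $a\leqslant k(\mathbf 1-e_i)$ with $a_i=0$; in particular all $x_j\partial_{x_i}$, $j\ne i$, and more generally $x^a\partial_{x_i}$ with $x_i$ absent (these are divergence free). Brackets $[x_j\partial_{x_i}, x_i\partial_{x_j}]$-type products: $[x_j\partial_{x_i},x_i\partial_{x_j}]=x_j\partial_{x_j}-x_i\partial_{x_i}$, so $L\supseteq\mathfrak{sl}_n$, and with $D_{\balpha}$ (generic, so $\sum\alpha_i\ne0$ can be arranged) we get $\mathfrak{gl}_n$, i.e. all linear fields, including the Euler field. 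Then using the $\mathfrak{gl}_n$-action and brackets like $[x_i x_j\partial_{x_l},\cdot]$ one obtains fields involving $x_i$ in the $\partial_{x_i}$-component.

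The main obstacle is to show that $L$ contains all of $\Vect^0(\AAA^n)$ of each degree. I would argue degree by degree: the degree-$d$ part $\Vect^0_d$ (homogeneous coefficients of degree $d+1$) is an irreducible $\mathfrak{gl}_n$-module for $d\geqslant 1$ (classical: it is the kernel of divergence $S^{d+1}V\otimes V^*\to S^dV$, which is irreducible with highest weight vector $x_1^{d+1}\partial_{x_n}$). Since $L$ contains $\mathfrak{gl}_n$ and the nonzero divergence-free element $x_1^{d+1}\partial_{x_n}$ for $d+1\leqslant k$ (a derivative of $m_n\partial_{x_n}$ when $n\geqslant2$), $L\supseteq\Vect^0_d$ for all $d\leqslant k-1$, in particular for $d=1,2$. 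Finally, $[\Vect^0_1,\Vect^0_d]=\Vect^0_{d+1}$ for $d\geqslant1$ (nonzero by e.g. $[x_1^2\partial_{x_n},x_1^{d+1}\partial_{x_n}]$-type computations with $n\geqslant 2$, e.g. $[x_1x_n\partial_{x_n}-\ldots]$; and irreducibility gives equality once the bracket is nonzero), so by induction $L\supseteq\Vect^0(\AAA^n)$, and with the Euler field $L=\Vect^c(\AAA^n)$. The delicate point is verifying that nonzero bracket in every degree and that $\Vect^0_0=\mathfrak{sl}_n$, $\Vect^0_{-1}$ are covered, which we have.
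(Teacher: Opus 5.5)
Your first step matches the paper's: normalize $\delta$ by \Cref{change_final}, take $D=D_{\balpha}$ with $\balpha$ in the open set of \Cref{lem:eular} and $\sum\alpha_i\neq0$, and read off $\partial_{x_i}$ and $(x_1\ldots\widehat{x_i}\ldots x_n)^k\partial_{x_i}$ from one-dimensional $\ZZ^n$-graded pieces.

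The two routes diverge in the second step. The paper takes $k=5$ and cites Andrist's theorem that the fields $x^a\partial_{x_i}$ with $a_i=0$ and all exponents at most $5$ generate $\Vect^0(\AAA^n)$. You instead prove the needed generation directly from the $\mathfrak{gl}_n$-module structure of the graded pieces $\Vect^0_d$. Your argument is sound. $L$ contains $\mathfrak{gl}_n$ and the nonzero element $x_1^{2}\partial_{x_n}\in\Vect^0_1$. Linear fields preserve $\Vect^0_d$, so the span of $[\Vect^0_1,\Vect^0_d]$ is a $\mathfrak{gl}_n$-submodule of $\Vect^0_{d+1}$, and irreducibility makes it all of $\Vect^0_{d+1}$ once it is nonzero.

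What your approach buys is a self-contained proof that already works with $k=2$. It rests only on the classical splitting of $S^{d+1}V^*\otimes V$ into the kernel of contraction and $S^dV^*$. Irreducibility of that kernel over an arbitrary $\KK$ of characteristic zero follows from irreducibility over $\overline{\KK}$ by base change. The cost is some representation theory that the paper outsources to a citation.

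There is one slip to fix. Your proposed witness of nonvanishing fails, since $[x_1^2\partial_{x_n},x_1^{d+1}\partial_{x_n}]=0$. Use instead
\[
[x_n^2\partial_{x_1},\,x_1^{d+1}\partial_{x_n}]=(d+1)x_1^{d}x_n^2\partial_{x_n}-2x_1^{d+1}x_n\partial_{x_1}\neq0,
\]
where $x_n^2\partial_{x_1}\in\Vect^0_1$ and $x_1^{d+1}\partial_{x_n}\in\Vect^0_d$.
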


\begin{proof}
    Let $\delta$ be a non-zero vector field on $\mathbb A^n$. 
    By~\Cref{change_final}, we can assume that $\delta$ has a non-zero coefficient in front of each $\partial_{x_i}$ and of each $(x_1\ldots \widehat{x_i}\ldots x_n)^5\partial_{x_i}$. 
    We apply~\Cref{lem:eular} to $\delta$ and let $\alpha_1, \ldots, \alpha_n$ be a point in the resulting open set such that $\alpha_1 + \ldots + \alpha_n \neq 0$ (which is also an open condition and can be fulfilled).
    We take $D = \alpha_1 x_1 \partial_{x_1} + \ldots + \alpha_n x_n \partial_{x_n}$.
    Then the Lie algebra generated by $\delta$ and $D$ contains all $\ZZ^n$-homogeneous components of $\delta$. 
    In particular, it contains, for every $1 \leqslant i \leqslant n$, $\partial_{x_i}$ and $(x_1 \ldots \widehat{x_i}\ldots x_n)^5\partial_{x_i}$ since these vector fields belong to one-dimensional $\mathbb Z^n$-homogeneous components of $W_n$. 
    Applying all possible $\partial_{x_j}$ to all possible $(x_1 \ldots \widehat{x_i}\ldots x_n)^5\partial_{x_i}$, we obtain all vector fields of the form $x_1^{k_1}\ldots,\widehat{x_i^{k_i}}\ldots x_n^{k_n}$ with $k_1, \ldots, \widehat{k_i}, \ldots, k_n \leqslant 5$. 
    It follows from~\cite[Theorem 14]{Andrist2019} that these vector fields generate the Lie algebra $\Vect^0(\AAA^n)$ of vector fields with zero divergence. 
    Together with the vector field $D = \alpha_1 x_1 \partial_{x_1} + \ldots + \alpha_n x_n \partial_{x_n}$ with $\alpha_1 + \ldots + \alpha_n \neq 0$, we obtain the whole Lie algebra $\Vect^c(\AAA^n)$ of vector fields with constant divergence.
\end{proof}

\begin{corollary}\label{crl:constant}
    The Lie algebra $\Vect^c(\mathbb A^n)$ is $1.5$-generated.
\end{corollary}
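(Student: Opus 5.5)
The corollary follows almost directly from \Cref{thm:constant}. Fix a nonzero $\delta \in \Vect^c(\AAA^n)$; we have to produce $D \in \Vect^c(\AAA^n)$ with $\operatorname{Lie}(\delta, D) = \Vect^c(\AAA^n)$. For $n = 1$ this was already observed at the start of the section: $\Vect^c(\AAA^1) = \langle \partial_{x_1}, x_1\partial_{x_1}\rangle$ is two-dimensional, so any nonzero $\delta$ can be completed to a basis, and a basis generates. So assume $n \geqslant 2$.

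Since $\Vect^c(\AAA^n) \subseteq \Vect(\AAA^n)$, the element $\delta$ is a nonzero polynomial vector field, and \Cref{thm:constant} applies. It yields $D \in \Vect^c(\AAA^n)$ such that $\operatorname{Lie}(\delta, D) \supseteq \Vect^c(\AAA^n)$. Both $\delta$ and $D$ lie in $\Vect^c(\AAA^n)$, which is a Lie subalgebra, as shown earlier via the bracket formula for the divergence. Hence $\operatorname{Lie}(\delta, D) \subseteq \Vect^c(\AAA^n)$, and equality follows. This holds for every nonzero $\delta$, which is exactly the definition of $1.5$-generatedness.

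The only subtlety is coordinate independence. The proof of \Cref{thm:constant} constructs $D$ as $D_{\balpha}$ in coordinates adapted to $\delta$ (obtained by the changes of variables of \Cref{change_final}), not in the original coordinates $x_1, \ldots, x_n$. This is harmless because $\Vect^c(\AAA^n)$ is invariant under regular automorphisms of $\AAA^n$. Thus $D$, written back in the original coordinates, still has constant divergence, and the conclusion $\operatorname{Lie}(\delta, D) = \Vect^c(\AAA^n)$ is intrinsic. There is no genuine obstacle here: all the work is contained in \Cref{thm:constant}.
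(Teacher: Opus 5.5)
Your argument is correct and matches the paper's proof: apply Theorem~\ref{thm:constant} to $\delta \in \Vect^c(\AAA^n)$, then use that $\Vect^c(\AAA^n)$ is a Lie subalgebra containing both $\delta$ and $D$ to upgrade the inclusion to equality. Your separate treatment of $n=1$ and your coordinate-invariance remark are correct but not needed. The paper handles $n=1$ at the start of Section~\ref{seq:1.5}, and the theorem's statement already guarantees $D \in \Vect^c(\AAA^n)$ in the original coordinates.
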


\begin{proof}
    Let $\delta$ be a non-zero vector field on $\mathbb A^n$ with constant divergence. According to Theorem \ref{thm:constant}, there exists $D \in \Vect^c(\mathbb A^n)$ such that the Lie algebra generated by $\delta$ and $D$ contains $\Vect^c(\mathbb A^n)$. On the other hand, both $\delta$ and $D$ belong to $\Vect^c(\mathbb A^n)$ so the Lie algebra generated by these two vector fields coincides with $\Vect^c(\mathbb A^n)$.
\end{proof}

However, the following question remains open.

\begin{problem}
    %Is the Lie algebra $\Vect(\AAA^n)$ or $\Vect^0(\AAA^n)$ $1.5$-generated?
    Are the Lie algebras $\Vect(\AAA^n)$ and $\Vect^0(\AAA^n)$ $1.5$-generated?
\end{problem}

%%%%%%%%%%%%%%%%%%%%%%%%%%%%%%%%%%%%%%%%%

\section{Generating $W_n$ by two complete vector fields}\label{seq:complete}

In this section, suppose that $\KK$ is the field of complex numbers $\CC$. 
In this case, for any smooth affine algebraic variety $X$ and for any holomorphic (in particular, algebraic) vector field~$\delta$ on $X$, there is a classical notion of the flow map $\varphi_{\delta}$ of $\delta$ which we recall briefly. 
For $t\in \CC$ and $p\in X$, the value $\varphi_{\delta}(t,p) = \varphi_{\delta, t}(p)$ is defined as $\gamma(t)$, where $\gamma$ is the integral curve of $\delta$ (i.e., the map $\gamma\colon \Omega \to X$, where $\Omega\subseteq \CC$ is a neighbourhood of $0$ containing $t$, such that the tangent vector $\dot{\gamma}(s)$ is equal to the value of $\delta$ at $\gamma(s)$ for all $s\in \Omega$) with $\gamma(0) = p$. 
It follows from the existence and uniqueness theorem for ordinary differential equations that for any $p\in X$ the flow map $\varphi_{\delta,t}$ is defined for all sufficiently small values of~$t$. 
Moreover, the flow map is holomorphic and depends holomorphically on $t$.

%There is the following notion of a complete vector field.

\begin{definition}[Complete vector field]
    An algebraic vector field on a complex affine algebraic variety $X$ is called \emph{complete} (or \emph{completely integrable}) if its flow map is well-defined 
    for all complex times. 
\end{definition}

For a complete vector field $\delta$ on $X$, the map $\varphi_{\delta,t}$ is a holomorphic automorphism of~$X$ for any $t\in\CC$. 
Thus, $\varphi_{\delta}$ defines a one-parameter subgroup in the group of holomorphic automorphisms of $X$. 
Note that the flow map of a complete vector field $\delta$ is not necessarily algebraic even if $\delta$ itself is algebraic.

\begin{example}
Consider
$\delta_1 = z_1\frac{\partial}{\partial z_1}$ and $\delta_2 = z_1z_2\frac{\partial}{\partial z_1}$ on $\CC$ and $\CC^2$.
%are given, respectively, by the formulas
Their flow maps are given by
\begin{align*}
\varphi_{\delta_1,t}(z_1) &= \operatorname{exp}(t)z_1,\\
\varphi_{\delta_2,t}(z_1,z_2) &= (\operatorname{exp}(tz_2)z_1, z_2).
\end{align*}
The first flow map does not depend algebraically on $t$ (although it defines an automorphism of $\CC$ for any fixed $t\in\CC$), while the second one is not an algebraic automorphism of $\CC^2$ even for a fixed $t\in \CC$.
\end{example}

Now we are going to show that the Lie algebra $W_n$ of all polynomial vector fields on $\mathbb A^n$ can be generated by two complete vector fields (and hence there exist two one-parameter subgroups in the group of holomorphic automorphisms of $\mathbb A^n$ acting infinitely transitively on $\mathbb A^n$). 
%Let us start with the following proposition.

\begin{proposition}\label{prop:non-constant}
    Let $\delta$ be a vector field on $\AAA^n$ with non-constant divergence.
    Then the Lie algebra generated by $\delta$ and $\Vect^0(\mathbb A^n)$ coincides with $W_n$.
\end{proposition}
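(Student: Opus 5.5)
Let $L$ denote the Lie algebra generated by $\delta$ and $\Vect^0(\AAA^n)$. The plan is first to produce inside $L$ enough fields of the form $h\,E$, where $E = x_1\partial_{x_1}+\ldots+x_n\partial_{x_n}$, or more generally fields of prescribed divergence. Then we use the fact that every element of $W_n$ differs from such a field by a divergence-free one. The key identity is
\[
\Div([\eta,\delta]) = \eta(\Div\delta) \quad \text{for } \eta\in\Vect^0(\AAA^n),
\]
which follows from the bracket formula for divergence. Thus, writing $g=\Div(\delta)$, the set $\{\Div(\xi)\mid \xi\in L\}$ is a linear subspace $P\subseteq\KK[\bx]$. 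This subspace contains $g$ and is stable under all operators $f\mapsto \eta(f)$ with $\eta\in\Vect^0(\AAA^n)$. Moreover, $\Div$ vanishes exactly on $\Vect^0(\AAA^n)\subseteq L$, so $L=\Div^{-1}(P)$. Since every polynomial is the divergence of some field (e.g.\ $\int f\,dx_1\,\partial_{x_1}$), it suffices to show that $P=\KK[\bx]$.

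Next I would show that $P$ contains all polynomials. The constant fields $\partial_{x_i}$ are divergence-free, so $P$ is stable under each $\partial_{x_i}$. Since $g$ is nonconstant, repeated differentiation yields a nonzero constant, so $1\in P$, and also some polynomial of degree exactly one. The latter is a nonzero linear form $\ell$ after subtracting a constant. The fields $x_j\partial_{x_i}$ with $i\ne j$ are divergence-free and act on linear forms by the $\mathfrak{sl}_n$ action, which is irreducible on linear forms for $n\geqslant 2$. Together with translations, this gives all $x_i$ in $P$. More generally, $x^m\partial_{x_i}$ with $m$ not involving $x_i$ is divergence-free. Applying it to $x_i\in P$ gives $m\in P$ for every monomial $m$ not involving $x_i$.

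To reach the remaining monomials, I would iterate using fields $f\partial_{x_i}$ with $f$ independent of $x_i$. Applying such fields to elements of $P$ multiplies by $f$ after differentiating in $x_i$. Starting from $x_i^{a}\cdot(\text{stuff})$ would be circular, so instead I would use an induction on degree. Assume all monomials of degree $\leqslant d$ lie in $P$, and take a monomial $x^{\beta}$ of degree $d+1$. Choose $i$ with $\beta_i\geqslant1$ and $j\neq i$. The field $x^{\beta'}x_i^{\beta_i}\partial_{x_j}$, with $\beta'$ not involving $x_j$, is divergence-free since its coefficient does not depend on $x_j$. Applying it to $x_j^{\beta_j+1}$ (of lower degree when appropriate) gives $(\beta_j+1)x^\beta$. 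When $\beta_j=0$ one uses $x_j\in P$ directly. Here I need to check that $x_j^{\beta_j+1}$ has degree $\leqslant d$, or otherwise handle pure powers $x_j^{d+1}$ separately. Pure powers come from applying $x_j^{d+1}\partial_{x_i}$ to $x_i$.

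The main obstacle is this bookkeeping in the induction, i.e., making sure every monomial is reached by a divergence-free field whose coefficient avoids the differentiation variable. It is routine but needs care when $\beta$ involves all variables. In that case, pick $j$ with $\beta_j$ minimal and apply $\big(x^\beta/x_j^{\beta_j}\big)\partial_{x_j}$ to $x_j^{\beta_j+1}$. That target has degree $\beta_j+1\leqslant d$ whenever $\beta_j+1<d+1$, which holds because at least two variables appear in $\beta$.
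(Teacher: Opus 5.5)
Your proof is correct and follows the paper's route. You reduce to showing that the divergence image $P$ of $L$ is all of $\KK[\bx]$, extract $1$ and the coordinate functions from $\Div(\delta)$ using $\partial_{x_i}$ and $x_j\partial_{x_i}$, and then reach every monomial by applying divergence-free fields. The only difference is in that last step: the paper applies the binomial field $\frac{mx_1}{d_1+1}\partial_{x_1}-\frac{mx_2}{d_2+1}\partial_{x_2}$ to $x_1$ in a single step, while you use monomial fields $x^\gamma\partial_{x_i}$ with $x^\gamma$ free of $x_i$.

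Your degree induction is unnecessary, and the inequality $\beta_j+1<d+1$ you cite fails for $\beta=(1,1)$ (your pure-power fallback does cover that case). Every pure power $x_j^{a}$ already lies in $P$, since applying $x_j^{a}\partial_{x_i}$ with $i\neq j$ to $x_i$ gives it. So applying $(x^\beta/x_j^{\beta_j})\partial_{x_j}$ to $x_j^{\beta_j+1}$ settles all monomials involving every variable at once.
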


\begin{proof}
Recall that, for $\delta_1 \in \Vect^0(\mathbb A^n)$ and $\delta_2 \in W_n$, we have 
    \[
    \Div([\delta_1, \delta_2]) = \delta_1 (\Div(\delta_2)).
    \]
    It is sufficient to show that this algebra contains a derivation with divergence $m$ for every monomial $m \in \KK[\bx]$.
    By acting by $\partial_{x_i} \in \Vect^0(\mathbb A^n)$ on $\delta$, we show this for $m = 1$.
    Since $\Div(\delta)$ is not a constant, one can obtain a derivation $\delta_1$ with linear divergence by acting on it by partial derivative operators.
    By performing a linear change of coordinates if necessary, we will further assume that $\Div(\delta_1) = x_1$. 
    By acting by $x_j \partial_{x_1} {\in \Vect^0(\mathbb A^n)}$ for $j = 2, \ldots, n$ on $\delta$ we obtain $\delta_2, \ldots, \delta_n$ with $\Div(\delta_i) = x_i$.

    Let $m = x_1^{d_1} \ldots x_n^{d_n}$ be any monomial. 
    We have $\frac{mx_1}{d_1 + 1}\partial_{x_1} - \frac{mx_2}{d_2 + 1}\partial_{x_2} \in \Vect^0(\mathbb A^n)$, so
    \[
    \Div\left( \left[\frac{mx_1}{d_1 + 1}\partial_{x_1} - \frac{mx_2}{d_2 + 1}\partial_{x_2}, (d_1 + 1)\delta_1\right] \right) = mx_1.\qedhere
    \]
    Therefore, for any monomial divisible by $x_1$, the Lie algebra generated by $\delta$ and $\Vect^0(\mathbb A^n)$ contains a derivation whose divergence is equal to this monomial. 
    Similarly, this is true for a monomial divisible by any other variable, i.e., for any non-constant monomial. This concludes the proof.
\end{proof}

Now, we are ready to prove the main result of this section.

\begin{theorem}\label{thm:complete}
    For any $n \geq 2$ and any non-zero $\delta \in W_n$ with non-constant divergence, there exists a complete vector field $D \in W_n$ such that $W_n$ is generated by $\delta$ and $D$.
\end{theorem}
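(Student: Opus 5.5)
The plan is to combine Theorem~\ref{thm:constant} with Proposition~\ref{prop:non-constant}. We must, however, choose $D$ complete.

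Given $\delta$ with non-constant divergence, we proceed as in the proof of Theorem~\ref{thm:constant}. By Proposition~\ref{change_final} (applied with $k=5$) there is a polynomial automorphism $\varphi$ of $\AAA^n$ after which $\delta$ has nonzero coefficients at every $\partial_{y_i}$ and every $(y_1\ldots\widehat{y_i}\ldots y_n)^5\partial_{y_i}$. We then take $D := D_{\balpha} = \alpha_1 y_1\partial_{y_1}+\ldots+\alpha_n y_n\partial_{y_n}$, with $\balpha$ in the open set of Lemma~\ref{lem:eular} and $\alpha_1+\ldots+\alpha_n\neq 0$.

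The key observation is that $D_{\balpha}$ is complete in the $y$-coordinates: its flow is $y\mapsto(e^{\alpha_1 t}y_1,\ldots,e^{\alpha_n t}y_n)$. Completeness is preserved under pullback by the algebraic (hence biholomorphic) automorphism $\varphi$ of $\CC^n$, so $D$ expressed in the original $x$-coordinates is a complete polynomial vector field. Since the Lie algebra generated by two elements is intrinsic and $\varphi$ induces a Lie algebra automorphism of $W_n$, it suffices to argue entirely in the $y$-coordinates.

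Next we need $\Vect^0(\AAA^n)\subseteq \operatorname{Lie}(\delta,D)$. The argument of Theorem~\ref{thm:constant} gives this directly. By Lemma~\ref{lem:eular}, the span of $\operatorname{ad}(D)^r\delta$ contains all $\ZZ^n$-homogeneous components of $\delta$. The components $\partial_{y_i}$ and $(y_1\ldots\widehat{y_i}\ldots y_n)^5\partial_{y_i}$ lie in one-dimensional graded pieces, so we get them up to nonzero scalars. Bracketing with the $\partial_{y_j}$ produces all $y_1^{k_1}\ldots\widehat{y_i^{k_i}}\ldots y_n^{k_n}\partial_{y_i}$ with exponents at most $5$, and by~\cite[Theorem 14]{Andrist2019} these generate $\Vect^0(\AAA^n)$. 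That theorem only uses these fields, so it applies regardless of the divergence of $\delta$.

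Finally, $\Div(\delta)$ is non-constant, and this is coordinate-independent because divergences in different coordinates differ by the Jacobian term, which is constant for polynomial automorphisms. Proposition~\ref{prop:non-constant} then shows that $\delta$ together with $\Vect^0(\AAA^n)$ generates $W_n$, hence $\operatorname{Lie}(\delta,D)=W_n$.

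The main obstacle I expect is a bookkeeping point rather than a computation: checking that the divergence formula transforms correctly under the automorphism $\varphi$. The cleanest route is to observe that $\varphi$ induces an automorphism of $W_n$ preserving $\Vect^0$ and $\Vect^c$ (cited from~\cite{KR}), and that it maps $W_n\setminus\Vect^c$ to itself. The condition $\sum\alpha_i\neq 0$ is not even needed here, since Proposition~\ref{prop:non-constant} supplies everything beyond $\Vect^0(\AAA^n)$.
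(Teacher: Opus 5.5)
Your proposal is correct and takes essentially the same route as the paper: run the construction from the proof of Theorem~\ref{thm:constant}, observe that $D=D_{\balpha}$ in suitable polynomial coordinates and is therefore complete, and finish with Proposition~\ref{prop:non-constant}. Your extra remarks are valid refinements of that same argument. Under automorphisms with constant Jacobian $J$ the divergence is in fact unchanged, not merely shifted by a constant, since the correction term is $\delta(J)/J=0$. The condition $\alpha_1+\ldots+\alpha_n\neq 0$ is indeed superfluous here, because Proposition~\ref{prop:non-constant} only needs $\Vect^0(\AAA^n)$.
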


\begin{proof}
    Let $\delta$ be any non-zero vector field on $\AAA^n$ with non-constand divergence. By Theorem \ref{thm:constant}, there exists a vector field $D$ with constant divergence such that the Lie algebra generated by $\delta$ and $D$ contains $\Vect^c(\mathbb A^n)$. Moreover, according to the proof of this theorem, this vector field is complete, Indeed, it is equal to $D_{\balpha}$ in some coordinates, so its flow map is given by $$\varphi_{D_{\balpha}}(t)\colon (x_1, x_2, \ldots, x_n) \mapsto (x_1\exp(t_1\alpha_1), x_2\exp(t_2\alpha_2), \ldots, x_n\exp(t_n\alpha_n))$$
    and is clearly defined for any value $t\in\mathbb C$. By Proposition \ref{prop:non-constant}, the Lie algebra generated by $\delta$ and $\Vect^c(\mathbb A^n)$ is the whole $W_n$, so we are done.
\end{proof}

\begin{corollary}\label{cor:complete}
    The Lie algebra $W_n$ can be generated by two complete vector fields.
\end{corollary}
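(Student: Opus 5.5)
The plan is to apply Theorem~\ref{thm:complete} to a single explicit vector field $\delta$ that is itself complete and has non-constant divergence. Theorem~\ref{thm:complete} then supplies a complete $D$ such that $\delta$ and $D$ generate $W_n$, and the pair $\{\delta, D\}$ is the required set of two complete generators.

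For $n \geqslant 2$ I will take $\delta = x_1 x_2 \partial_{x_1}$, extending the example given earlier in this section. Its divergence is $x_2$, which is non-constant. The field is complete: since $\dot x_2 = 0$, the coordinate $x_2$ stays constant along trajectories, and the equation $\dot x_1 = x_2 x_1$ then has the global solution $x_1(t) = \exp(t x_2)x_1$. So the flow
\[
\varphi_{\delta,t}(x_1, x_2, \ldots, x_n) = (\exp(t x_2)x_1, x_2, \ldots, x_n)
\]
is defined for all $t \in \CC$. Theorem~\ref{thm:complete} then gives a complete $D$ with $\operatorname{Lie}(\delta, D) = W_n$, which proves the corollary for $n \geqslant 2$.

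The case $n = 1$ is not covered by Theorem~\ref{thm:complete} and has to be handled separately. Here $W_1 = \CC[x]\partial_x$. I will use the complete fields $\partial_x$ and $x^2\partial_x$. Both are complete in the sense of the definition above, which I read as holomorphic completeness: $\partial_x$ has the translation flow, and for $x^2\partial_x$ the flow $x/(1-tx)$ fails to exist at $t = 1/x$, so this field would not be complete. I therefore replace the second field by $x\partial_x$, which is complete, but $\partial_x$ and $x\partial_x$ only generate a two-dimensional algebra. In fact, a complete polynomial vector field on $\CC$ has degree at most $1$, so the complete fields span only $\langle \partial_x, x\partial_x\rangle$, which is a proper subalgebra of $W_1$. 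Hence the corollary cannot hold for $n = 1$, and I will state it with the implicit assumption $n \geqslant 2$, matching Theorem~\ref{thm:complete}.

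The only real check is the completeness of $\delta$; the argument involves no substantive obstacle beyond that.
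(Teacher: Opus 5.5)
Your proof is correct and is essentially the paper's: you take the complete field $\delta = x_1x_2\partial_{x_1}$, whose divergence $x_2$ is non-constant, and apply Theorem~\ref{thm:complete} to get the complete partner $D$. Your remark that the statement fails for $n=1$ is right, since complete polynomial fields on $\CC$ have degree at most one and so span a proper two-dimensional subalgebra; this matches the paper's tacit assumption $n \geqslant 2$, but you should remove the false start with $x^2\partial_x$ from that paragraph.
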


\begin{proof}
    The statement follows easily from Theorem \ref{thm:complete}. Indeed, it suffices to take an arbitrary complete vector field $\delta$ with non-constant divergence. For example, we can take $\delta = x_1x_2\partial_{x_1}$ since the flow map $\varphi_\delta$ of $\delta$ is given by $\varphi_\delta \colon (x_1, x_2, \ldots, x_n) \mapsto (x_1\exp{tx_2}, x_2, \ldots, x_n)$ and is defined for all $t\in\mathbb C$. 
\end{proof}

\bibliographystyle{abbrvnat}
\bibliography{bib}

\Addresses

\end{document}